\numberwithin{equation}{section}
\newtheorem*{theorem*}{Theorem}
\newtheorem*{lemma*}{Lemma}
\newtheorem*{proposition*}{Proposition}
\newtheorem*{problem*}{Problem}
     \newtheorem*{question*}{Question}
      \theoremstyle{definition}
     \theoremstyle{remark}
     \newtheorem*{remark*}{Remark}
\newcommand{\Sym}{\mathop{\mathrm{Sym}}}
\newcommand{\PSL}{\mathop{\mathrm{PSL}}}
\newcommand{\lcm}{\mathop{\mathrm{lcm}}}
\newcommand{\expr}{\mathcal{E}}
 \definecolor{mycolor}{rgb}{0.55,0.0,0.16}
  \definecolor{myred}{rgb}{0.75,0.0,0.16} 
  \definecolor{mygreen}{rgb}{0.0,0.4,0.16} 
  \definecolor{myviolet}{rgb}{1,0,1} 
   \definecolor{mypink}{rgb}{0.67,0,0.47}
\subjclass[2020]{Primary: 20D99}  
\keywords{Exponent, number of generators}
\author[Luca Sabatini]{Luca Sabatini}
\address{\parbox{\linewidth}{Luca Sabatini, Mathematics Institute, Zeeman Building, University of Warwick\\
Coventry CV4\,7AL, United Kingdom \vspace{0.1cm}}}
\email{luca.sabatini@warwick.ac.uk, sabatini.math@gmail.com} 
\begin{document} 
 \title[Exponent and number of generators]{Exponent and number of generators\\in a finite group} 

\maketitle 

\begin{abstract} 
We present
a sharp upper bound for the number of generators of a finite group
in terms of the ratio between the order and the exponent.
\end{abstract} 

\vspace{1cm}

The exponent of a finite group is the smallest natural number $n$ such that $x^n =1$ for all elements $x$.
It is always a divisor of the order of the group, with equality if and only if all the Sylow subgroups are cyclic.
A classical theorem in finite group theory \cite[Th. 5.16]{Isa08} says that
a group with this property is metacyclic, so in particular it can be generated by $2$ elements.

In 1989, Guralnick and Lucchini \cite{Gur89,Luc89} have greatly generalized this fact by showing that,
if all the Sylow subgroups can be generated by $d$ elements,
then the whole group can be generated by $d+1$ elements.
In this note we prove the following inequality, which improves the classical theorem in a different direction
related to the exponent.
We write $d(G)$ for the minimal number of generators and $\exp(G)$ for the exponent.

\begin{theorem*} \label{thMain}
For every finite group $G$ we have
$$ p^{d(G) -2} \> \leq \> \frac{|G|}{\exp(G)} , $$
where $p$ is the smallest prime dividing $|G|$.

Equality holds if and only if $G$ is a noncyclic group with $\exp(G)=|G|$.
\end{theorem*}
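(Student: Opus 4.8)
The plan is to reduce the statement to two ingredients: the Guralnick--Lucchini theorem quoted above, applied one prime at a time, and a sharp bound for $p$-groups. The key lemma I would prove first is that for a $p$-group $P$ of order $p^n$ and exponent $p^e$ one has
\[
d(P)+e\le n+1 .
\]
Granting this, the rest of the inequality is bookkeeping: writing $|G|=\prod_q q^{n_q}$ and using the standard fact that $\exp(G)=\prod_q \exp(G_q)$ (so the $q$-part of the exponent is $\exp(G_q)=q^{e_q}$, where $G_q$ is a Sylow $q$-subgroup), we get $|G|/\exp(G)=\prod_q q^{\,n_q-e_q}$ with every exponent $n_q-e_q\ge 0$.

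I would prove the $p$-group lemma by induction on $n$. If $P$ is elementary abelian then $d(P)=n$ and $e=1$, so equality holds. Otherwise $\Phi(P)\ne 1$, and since $\Phi(P)$ is a nontrivial normal subgroup of the $p$-group $P$ it meets the centre; so I may choose a central subgroup $Z\le \Phi(P)\cap Z(P)$ of order $p$. Because $Z\le\Phi(P)$, passing to $\bar P=P/Z$ leaves the number of generators unchanged, $d(\bar P)=d(P)$, while the exponent $p^{\bar e}$ of $\bar P$ satisfies $e\le \bar e+1$ (any element of $P$ whose image has order dividing $p^{\bar e}$ has its $p^{\bar e}$-th power in $Z$). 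By induction $d(\bar P)+\bar e\le (n-1)+1$, whence $d(P)+e=d(\bar P)+e\le d(\bar P)+\bar e+1\le n+1$, as desired.

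To assemble the main inequality, Guralnick--Lucchini gives $d(G)\le \max_q d(G_q)+1$, and the lemma gives $d(G_q)\le n_q-e_q+1$, so $d(G)-2\le \max_q(n_q-e_q)$. Letting $q_0$ attain this maximum and using that $p$ is the \emph{smallest} prime (so $p\le q_0$) together with $n_q-e_q\ge 0$, I obtain
\[
p^{d(G)-2}\le p^{\,n_{q_0}-e_{q_0}}\le q_{0}^{\,n_{q_0}-e_{q_0}}\le \prod_q q^{\,n_q-e_q}=\frac{|G|}{\exp(G)} .
\]
For the equality analysis, the forward direction is immediate: if $G$ is noncyclic with $\exp(G)=|G|$ then all Sylow subgroups are cyclic, so by the classical theorem $G$ is metacyclic with $d(G)=2$, while $|G|/\exp(G)=1=p^{0}$, giving equality. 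Conversely, equality forces the displayed chain to be tight. Tightness of the last two steps means $\prod_q q^{\,n_q-e_q}=p^{\max_q(n_q-e_q)}$; comparing prime factorizations, this holds exactly when every Sylow subgroup \emph{except possibly} $G_p$ is cyclic. If $G_p$ is cyclic too, then $\exp(G)=|G|$ and tightness of the first step forces $d(G)=2$, i.e.\ $G$ noncyclic, which is the asserted case.

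The one remaining possibility---and the main obstacle---is that $G_p$ is the \emph{only} noncyclic Sylow subgroup, with both bounds tight simultaneously, i.e.\ $d(G_p)=n_p-e_p+1$ and $d(G)=d(G_p)+1$. Here the hypothesis that $p$ is the smallest prime is essential, and the heart of the proof is to show this cannot occur: when the sole noncyclic Sylow subgroup sits at the smallest prime and every other Sylow subgroup is cyclic, the extra generator of Guralnick--Lucchini is never needed, so $d(G)=d(G_p)$, contradicting $d(G)=d(G_p)+1$. I would establish this by a chief-series analysis together with the refinement of Lucchini's bound in which the $+1$ is forced only by a complemented abelian chief factor $V$ (an irreducible module over some prime) of sufficient multiplicity: a $p'$-element acting nontrivially on such a $V$ would require a prime $q>p$ dividing $|\GL(V)|$ in a manner incompatible with the cyclicity of the $q$-Sylow subgroups of $G$, while a chief factor at $p$ itself is blocked by tightness of the $p$-group bound. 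Making this module-theoretic incompatibility rigorous in the possibly nonsolvable case is the delicate point; once it is in hand, the inequality and its equality case follow from the two elementary inputs above.
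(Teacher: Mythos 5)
The inequality and the forward direction of the equality claim are correct, and your route is essentially the paper's: your key lemma $d(P)+e\le n+1$ for a $p$-group of order $p^n$ and exponent $p^e$ is precisely the paper's Lemma ($p^{d(P)-1}\le \expr(P)$ for nilpotent groups) specialized to Sylow subgroups, which is the only case the paper actually uses; your inductive proof via a central subgroup of order $p$ inside $\Phi(P)$ is a valid alternative to the paper's argument via an element of maximal order; and the assembly through Guralnick--Lucchini, $d(G)\le \max_q d(G_q)+1$, is the same bookkeeping as the paper's inequality $({*}{*}{*})$.

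The genuine gap is in the ``only if'' half of the equality statement, and it sits exactly where you flag it: you must exclude the configuration in which $G_p$ is the unique noncyclic Sylow subgroup, $d(G_p)=n_p-e_p+1$, and $d(G)=d(G_p)+1$. You offer only a strategy for this and explicitly defer the ``delicate point,'' so as written the equality clause is not proved; the paper disposes of this case in one line by citing \cite[Cor.~4]{Luc97}. Your sketch of the mechanism is also aimed in the wrong direction: you look for an obstruction at a prime $q>p$ dividing $|\GL(V)|$, but in this configuration the critical complemented abelian chief factor $V$ must live at the prime $p$ itself, because the multiplicity of $V$ needed to force the extra generator inflates the rank of the Sylow subgroup at the characteristic of $V$, and $G_p$ is the only Sylow subgroup permitted to be noncyclic. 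Comparing that multiplicity with $d(G_p)=d(G)-1$ then forces $V$ to be one-dimensional over ${\bf F}_p$, and this is where the hypothesis that $p$ is the \emph{smallest} prime divisor of $|G|$ does its work: a group whose order is coprime to $p-1$ has no nontrivial one-dimensional module over ${\bf F}_p$, so $V$ would be trivial and could not account for the extra generator (the example $(C_3)^n\rtimes C_2$ in the paper shows the configuration genuinely occurs when the noncyclic Sylow subgroup is \emph{not} at the smallest prime, so some such argument is unavoidable). Until you either carry out this crown/module analysis in full, including the nonabelian-socle case, or invoke Lucchini's corollary as the paper does, the characterization of equality remains open.
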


This is the first upper bound for the number of generators in terms of the ratio between the order and the exponent.
The proof we present here relies on \cite{Gur89,Luc89}, 
which unfortunately depends on the Classification of the Finite Simple Groups.
 It would be very interesting to find a direct proof.\\

{\bfseries 1. The nilpotent case.}
Let $p$ be the smallest prime dividing $|G|$.
Moreover, let us write $\expr(G) = \frac{|G|}{\exp(G)}$.
The connection between $d(G)$ and $\expr(G)$ is transparent when $G$ is nilpotent.

\begin{lemma*} \label{lem1}
Let $G$ be a finite nilpotent group. Then $p^{d(G)-1} \leq \expr(G)$.
\end{lemma*}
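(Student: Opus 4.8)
The plan is to reduce the statement to the case of a $p$-group and then prove a sharp inequality there by induction on the order. Since $G$ is nilpotent it is the direct product $G = P_1 \times \cdots \times P_k$ of its Sylow subgroups, with the $|P_i|$ pairwise coprime. For such a product one has $d(G) = \max_i d(P_i)$, while $\expr$ is multiplicative, $\expr(G) = \prod_i \expr(P_i)$, because both $|G|$ and $\exp(G)$ factor through the $P_i$ (the exponent being the product of the coprime prime powers $\exp(P_i)$). Writing $p_j$ for the prime with $d(P_j) = d(G)$ and using $\expr(P_i) \geq 1$ for every $i$, it then suffices to establish the single-prime estimate $p_j^{\,d(P_j)-1} \leq \expr(P_j)$: indeed $\expr(G) \geq \expr(P_j) \geq p_j^{\,d(G)-1} \geq p^{\,d(G)-1}$, the last inequality because $p \leq p_j$ and $d(G) \geq 1$.

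So the heart of the matter is the following claim for a $p$-group $P$ of order $p^n$ and exponent $p^e$: one has $d(P) - 1 \leq n - e$, equivalently $d(P) + e \leq n + 1$. I would prove this by induction on $n$, the case $n = 1$ being trivial. If $\Phi(P) = 1$ then $P$ is elementary abelian, so $d(P) = n$ and $e = 1$ and equality holds. Otherwise $\Phi(P) \neq 1$; since $\Phi(P)$ is a nontrivial normal subgroup of the $p$-group $P$, it meets the center nontrivially, so I can choose a subgroup $Z \leq \Phi(P) \cap \mathbf{Z}(P)$ of order $p$. Passing to $\bar P = P/Z$ of order $p^{n-1}$, the inclusion $Z \leq \Phi(P)$ gives $\Phi(\bar P) = \Phi(P)/Z$ and hence $d(\bar P) = d(P)$, while quotienting by a central subgroup of order $p$ can lower the exponent by at most one, so $\exp(\bar P) \geq p^{e-1}$. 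The inductive hypothesis $d(\bar P) + \bar e \leq n$ then yields $d(P) + (e-1) \leq n$, which is exactly what is needed.

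The step I expect to be the main obstacle is the choice of the subgroup $Z$ in the inductive step. A naive induction that quotients by an \emph{arbitrary} central subgroup of order $p$ loses control: both $d(P)$ and $e$ can drop simultaneously, which gives only $d(P) + e \leq n + 2$ and misses the sharp bound by one. The decisive point is therefore to force $d$ to be preserved by taking $Z$ inside the Frattini subgroup, for which one needs the elementary but crucial fact that a nontrivial normal subgroup of a $p$-group has nontrivial intersection with the center. The elementary abelian case, where $\Phi(P) = 1$ and no such $Z$ is available, is precisely the equality case and must be handled separately, which dovetails with the equality analysis that the main theorem will ultimately require.
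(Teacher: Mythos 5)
Your proof is correct, but it takes a genuinely different route from the paper's. The paper does not reduce to $p$-groups at all: it uses nilpotency only to produce a single element $x$ of order $\exp(G)$ (clear for $p$-groups, and in general because elements of coprime order commute), so that $\langle x\rangle$ already has index $\expr(G)$, and then finishes with a greedy generation count --- each further generator outside the subgroup generated so far multiplies the order by at least $p$, whence $d(G)\leq 1+\log_p\expr(G)$. You instead decompose $G$ into its Sylow factors, using $d(G)=\max_i d(P_i)$ together with the multiplicativity of $\expr$, and then prove the $p$-group inequality $d(P)+e\leq n+1$ by induction on $|P|$, quotienting by a subgroup $Z$ of order $p$ chosen inside $\Phi(P)\cap\mathbf{Z}(P)$ so that $d$ is preserved while the exponent drops by at most one power of $p$; all steps check out (note that $\exp(P/Z)\geq\exp(P)/p$ needs only $|Z|=p$ and $Z\trianglelefteq P$, not centrality, which is used only to guarantee normality). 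The paper's argument is shorter and treats the nilpotent group in one stroke; yours isolates the purely $p$-local statement and makes the Frattini mechanism explicit, at the cost of an extra reduction step. One small caveat: your closing remark that the elementary abelian case ``is precisely the equality case'' of $d(P)+e\leq n+1$ is inaccurate --- equality also holds, for instance, for cyclic $p$-groups and for $C_{p^2}\times C_p$ --- but this comment plays no role in the proof itself.
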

\begin{proof}
The key point is that nilpotency guarantees the existence of an element $x$ of order $\exp(G)$.
This is clear for a $p$-group,
and the general case follows because elements of coprime order commute.
Now the subgroup generated by $x$ has index $\expr(G)$ in $G$.
If $x_1=x,\ldots,x_n$ are elements of $G$ with the property that $x_{i+1} \notin \langle x_1,\ldots,x_i \rangle$ for each $i$,
then $|G: \langle x_1,\ldots,x_n \rangle| \leq \frac{\expr(G)}{p^{n-1}}$.
The proof follows.
\end{proof}

In general, the exponent can be dramatically larger than the maximum order of an element.
In the symmetric group on $n$ points, the first is $\lcm(1,\ldots,n)$, which is asymptotically $e^{(1+o(1)) n}$,
while the second is just $e^{(1+o(1)) \sqrt{n \ln n}}$ \cite{Lan03}.\\

{\bfseries 2. The general case.}
Let $(P_i)_{i \geq 1}$ be a full set of Sylow subgroups of $G$ for distinct primes.
Then $|G| = \prod_{i \geq 1} |P_i|$.
Moreover, it is easy to see that $\exp(G) = \prod_{i \geq 1} \exp(P_i)$.
It follows that
\begin{equation} \label{eq} 
\expr(G) \> = \> \prod_{i \geq 1} \expr(P_i) . \tag{*}
\end{equation}
If $N \unlhd H \leqslant G$, then we say that $H/N$ is a {\itshape section} of $G$.
For any prime $p$, the Sylow $p$-subgroup of a section of $G$ is a section of the Sylow $p$-subgroup of $G$.
Therefore, it follows from (\ref{eq}) that
\begin{equation} \label{eqD} 
\expr(H/N) \mbox{ is always a divisor of } \expr(G) . \tag{**} 
\end{equation} 

\begin{proof}[Proof of the Theorem]
Let $P_i$ be a Sylow $p_i$-subgroup for each $i$.
Combining (\ref{eq}) with the Lemma, we obtain
$$ \expr(G) = \prod_{i \geq 1} \expr(P_i) \geq \prod_{i \geq 1} p_i^{d(P_i)-1} . $$
Passing to logarithms, it remains to show that 
\begin{equation} \label{eqq} 
\sum_{i \geq 1} \frac{(d(P_i)-1) \log p_i}{\log p} \> \geq \> d(G) -2 . \tag{***}
\end{equation}
Suppose $d(P_1) \geq d(P_i)$ for all $i$.
By \cite{Gur89,Luc89} we have $d(P_1) \geq d(G)-1$, and this proves the inequality.
If equality holds in (\ref{eqq}),
then $d(P_1) = d(G)-1$, $p=p_1$, and $P_i$ is cyclic for all $i \geq 2$.
It follows from \cite[Cor. 4]{Luc97} that also $P_1$ has to be cyclic and we are done.
\end{proof}

\begin{remark*}
The Theorem implies that if $\expr(G)=2$, then $d(G)=2$.
This is attained by $\PSL_2(p)$ for $p$ odd.
Moreover, if $G= (C_3)^n \rtimes C_2$ with $C_2$ acting as a power automorphism,
then $d(G)=n+1$ and $\expr(G)=3^{n-1}$.
\end{remark*}


In a private communication,
Lucchini observed that $d(G) = O(\log \log \expr(G))$ holds in several cases.
In fact, $d(G)$ equals $d(L_k)$ where $L_k$ is a crown-based power
of the monolithic primitive group $L$ with socle $N \cong T^t$ \cite[Th. 3]{Luc23}.
Moreover, $L_k$ is isomorphic to a quotient of $G$ and contains a normal subgroup isomorphic to $N^k$.
If $N$ is nonabelian, then
$$ d(G) = d(L_k) = O( \log (tk)) . $$
On the other hand $\expr(G) \geq \expr(T^{tk}) = |T|^{tk-1} \expr(T)$ (because $\exp(T^{tk})=\exp(T)$),
and so
$$ tk = O \left( \log \left( \frac{\expr(G)}{\expr(T)} \right) \right) . $$ \\

{\bfseries 3. A direct proof\,?}
While an elementary proof of \cite{Gur89,Luc89} seems to be out of reach,
proving the Theorem (or some weak version of it) might be more approachable.
The central problem is the following:

\begin{problem*}
Prove (avoiding CFSG) that $d(G)$ is bounded by a function on $\expr(G)$.
\end{problem*}

A finite group is {\itshape $d$-maximal} if it requires more generators than every proper subgroup.
For a comprehensive work on these groups, we refer the reader to \cite{LSS24}.
From (\ref{eqD}), we can assume that $G$ is $d$-maximal
and $\expr(M)=\expr(G)$ for all maximal subgroups $M$ of $G$.
The case $\expr(G)=2$ follows from the structural result of Wong \cite{Won66}.
Moreover, the next fact is essentially well known.

\begin{proposition*}
If $\expr(G)$ is odd, then $G$ is solvable.
\end{proposition*}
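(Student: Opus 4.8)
The plan is to read off the structure of the Sylow $2$-subgroup from the hypothesis and then run a classical transfer argument. If $|G|$ is odd there is nothing to prove, so assume $2 \mid |G|$ and let $P$ be a Sylow $2$-subgroup of $G$. By (\ref{eq}) the number $\expr(P) = |P|/\exp(P)$ is one of the factors of $\expr(G)$, and being the ratio of two powers of $2$ it is itself a power of $2$. Since $\expr(G)$ is odd, this forces $\expr(P) = 1$, that is $|P| = \exp(P)$; for a $p$-group this happens precisely when the group is cyclic. Hence $P$ is cyclic.

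Next I would show that a cyclic Sylow $2$-subgroup gives a normal $2$-complement. As $P$ is abelian we have $P \leq \C_{G}(P)$, and $\N_{G}(P)/\C_{G}(P)$ embeds in $\Aut(P)$, a $2$-group (since $P$ is a cyclic $2$-group, $|\Aut(P)|$ is a power of $2$). On the other hand $P$ is a Sylow $2$-subgroup of $\N_{G}(P)$ and already lies in $\C_{G}(P)$, so the index $|\N_{G}(P):\C_{G}(P)|$ is odd. An integer that is simultaneously a power of $2$ and odd equals $1$, so $\N_{G}(P) = \C_{G}(P)$ and $P$ is central in its normalizer. Burnside's normal $p$-complement theorem then yields a normal subgroup $N \unlhd G$ of odd order with $G/N \cong P$.

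Finally $N$, having odd order, is solvable by the Feit--Thompson theorem, while $G/N \cong P$ is cyclic and hence solvable; therefore $G$ is solvable.

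I expect the only genuinely deep ingredient to be the Feit--Thompson odd order theorem applied to $N$, which is precisely why the statement is ``essentially well known'' rather than elementary. Everything else is a short Burnside-style transfer computation, and the one point needing a little care is the degenerate case $|P|=2$, where $\Aut(P)$ is trivial but the argument goes through unchanged.
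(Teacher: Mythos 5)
Your proof is correct, and it shares the paper's overall skeleton --- oddness of $\expr(G)$ forces a cyclic Sylow $2$-subgroup, one then produces a normal subgroup of odd order, and Feit--Thompson finishes --- but the middle step is carried out by a genuinely different classical tool. You invoke Burnside's normal $p$-complement theorem: since $\Aut(P)$ is a $2$-group for $P$ a cyclic $2$-group while the index of the centralizer of $P$ in its normalizer is odd, $P$ is central in its normalizer and $G$ has a normal $2$-complement $N$ of odd order in one stroke. The paper instead observes that in the regular (Cayley) representation a generator $x$ of $P$ acts as $|G|/|x|$ cycles of even length $|x|$, with $|G|/|x|$ odd, hence as an odd permutation; this gives a characteristic subgroup of index $2$, and iterating peels off the whole Sylow $2$-subgroup to reach a characteristic subgroup of odd order. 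The trade-off is mild: your route is shorter and lands on the $2$-complement immediately, at the cost of quoting Burnside's theorem (i.e., transfer); the paper's route uses only the sign homomorphism and an induction, so it is marginally more self-contained. Both arguments are sound, both handle the degenerate case $|P|=2$ without change, and both ultimately rest on the same deep ingredient, the odd order theorem.
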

\begin{proof}
The hypothesis means precisely that, if nontrivial, a Sylow $2$-subgroup of $G$ is cyclic, say generated by $x$.
This implies that the Cayley embedding of $G$ into $\Sym(|G|)$ has to map $x$ to an odd permutation.
It follows that $G$ has a characteristic subgroup of index $2$,
and by iteration that $G$ has a characteristic subgroup of odd order and index a power of $2$.
By the Feit-Thompson theorem such a subgroup is solvable and we are done.
\end{proof}

\vspace{0.1cm}

   \vspace{0.5cm}

\end{document}